
\documentclass{jT}

\usepackage{amssymb,amsmath,latexsym,amsthm}
\usepackage[english]{babel}


\begin{document}

\newtheorem{teo}{Theorem}


\title[Dancs-He series for $\,\ln{2}\,$ and $\,\zeta{(2\,n+1)}$]{Using known zeta-series to derive the Dancs-He series for $\,\ln{2}\,$ and $\,\zeta{(2\,n+1)}$}

\author[F. M. S. {\sc Lima}]{{\sc F. M. S.} Lima}
\address{F. M. S. {\sc Lima}\\
Institute of Physics, University of Brasilia\\
P.O. Box 04455, 70919-970, Brasilia-DF, Brazil}
\email{fabio@fis.unb.br}


\date{\today}

\maketitle

\begin{resume}
Dans un ouvrage r\'{e}cent, Dancs et He ant trouv\'{e} une nouvelle formule d'Euler pour $\,\ln{2}\,$ et $\,\zeta{(2\,n+1)}$, $\,n\,$ \'{e}tant un entier positif, contenant chacune une s\'{e}rie qui ne peut apparemment pas \^{e}tre \'{e}valu\'{e}e dans une forme ferm\'{e}e, differement de $\,\zeta{(2\,n)}$, o\`{u} la formule d'Euler s'applique montrant que les m\^{e}mes valeurs de z\^{e}ta sont un multiple rationnel des $\,\pi^{2n}$. Il ya, toutefois, dans cette \'{e}tude, des formules issues de manipulations de certaines s\'{e}ries, en suivant la strat\'{e}gie des Tsumura, ce qui rend \emph{curieux} --- selon les termes de ces auteurs eux-m\^{e}mes --- l'apparition des nombres $\,\ln{2}\,$ et $\,\zeta{(2\,n+1)}$. Dans ce court article, je montre comment certaines z\^{e}ta-s\'{e}ries peuvent \^{e}tre utilis\'{e}es pour d\'{e}river les Dancs-He s\'{e}ries d'une mani\`{e}re plus directe.
\newline
\end{resume}

\begin{abstr}
In a recent work, Dancs and He found new `Euler-type' formulas for $\,\ln{2}\,$ and $\,\zeta{(2\,n+1)}$, $\,n\,$ being a positive integer, each containing a series that apparently can not be evaluated in closed form, distinctly from $\,\zeta{(2\,n)}$, for which the Euler's formula allows us to write it as a rational multiple of $\,\pi^{2n}$.  There in that work, however, the formulas are derived through certain series manipulations, by following Tsumura's strategy, which makes it \emph{curious} --- in the words of those authors themselves --- the appearance of the numbers $\,\ln{2}\,$ and $\,\zeta{(2\,n+1)}$.  In this short paper, I show how some known zeta-series can be used to derive the Dancs-He series in an alternative manner.
\newline
\end{abstr}

\textbf{Keywords:} Riemann zeta function, Euler's formula, Zeta-series.

\textbf{MSC:} 11M06, 11Y35, 41A30, 65D15.

\bigskip

\section{Introduction}

The Riemann zeta function is defined, for real values of $\,s$, $s>1$, by\footnote{There is also a product representation due to Euler (1749), namely $\zeta(s) = \prod_{\,p}{{\,1/(1-p^{-s})}}$, where the product is taken over all prime numbers $p$, which is the main reason for the interest of number theorists in this function. As noted by Euler, since the harmonic series diverges, then from Eq.~(\ref{eq:Zdef}) one deduces that $\lim_{s \rightarrow 1^{+}}{\,\zeta(s)} = \infty$, which implies, from the product representation, that there is an infinitude of prime numbers.}
\begin{equation}
\zeta(s) := \sum_{k=1}^\infty{\frac{1}{k^s}} \: .
\label{eq:Zdef}
\end{equation}
For $\,s>1$, the series in Eq.~(\ref{eq:Zdef}) converges by the integral test and its sum for integer values of $s$ has attracted much interest since the times of J. Bernoulli, who proved that $\sum_{k=1}^\infty{{\,1/k^{\,2}}}$ converges to a number between $1$ and $2$.  Further, Euler (1735) proved that this sum evaluates to ${\,\pi^{2}/6}\,$, solving the so-called \emph{Basel problem}. He also studied this kind of series for greater integer values, finding, for even values of $\,s$, the notable formula (1750)
\begin{equation}
\zeta{(2\,n)} = (-1)^{n-1} \, \frac{2^{2 n-1}\,\pi^{2 n}}{(2 n)!} \,B_{2 n} ,
\label{eq:Euler}
\end{equation}
where $n$ is a positive integer and $\,B_{2 n}$ are Bernoulli numbers.\footnote{Since $B_{2 n} \in \mathbb{Q}$  and $\pi$ is a transcendental number, as first proved by Lindemann (1882), then Eq.~(\ref{eq:Euler}) implies that $\,\zeta{(2\,n)}\,$ is a transcendental number.}  For odd values of $s$, $s>1$, on the other hand, no analogous closed-form expression is known. In fact, not even an irrationality proof is presently known for $\,\zeta{(2\,n+1)}$, except for the Ap\'{e}ry proof that $\,\zeta(3)\,$ is irrational (1978)~\cite{Apery},  which makes the things enigmatic.

On trying to find out a closed-form expression for $\,\zeta{(2\,n+1)}\,$ similar to that in Eq.~(\ref{eq:Euler}), Dancs and He found a new ``Euler-type'' formula containing series involving the numbers $\,E_{\,2\,n+1}(1)$, where $\,E_{\,2\,n+1}(x)\,$ denotes the Euler's polynomial of degree $2\,n+1$~\cite{Dancs}.
~Their main result follows from some intricate series manipulations, in the lines of those found in Tsumura's proof of Eq.~(\ref{eq:Euler})~\cite{Tsumura}.  

However, the fortuitous appearance of the numbers $\,\ln{2}\,$ and $\,\zeta{(2n+1)}$ in the Dancs-He formulae, which is hard to be explained with usual series expansions, might well remain a mystery.  By noting that the numbers $\,E_{\,2\,n+1}(1)$ can be written in terms of $\,B_{2 n+2}\,$, and then in terms of $\zeta(2n)$, via Eq.~(\ref{eq:Euler}), I show here in this work how the Dancs-He series for $\,\ln{2}$  and $\,\zeta{(2n+1)}$ can be derived from some known zeta-series.
\newpage

\section{Dancs-He formula for $\,\ln{2}$}

For $\,\ln{2}$, Dancs and He found that (see Eq.~(2.6) of Ref.~\cite{Dancs}):

\begin{teo}[Dancs-He series for $\ln{2}$]  \label{teo:ln2}
\quad Let $E_{2 m+1}(x)$ denote the Euler's polynomial of degree $2 m+1$, $m$ being a nonnegative integer. Then
\begin{equation*}
\ln{2} = \frac{\pi^2}{2} \, \sum_{m=0}^\infty{(-1)^m \, \frac{\pi^{2 m}}{(2 m+3)!} \, E_{2 m+1}(1)} \, .
\label{eq:t1}
\end{equation*}
\end{teo}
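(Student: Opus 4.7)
The plan is to convert each factor $E_{2m+1}(1)$ into a value of $\zeta$ by combining two classical identities, and then to recognise the resulting series as a zeta-series whose sum can be evaluated in closed form via the partial-fraction expansion of $\tan$.

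First, I would recall the standard identity (a direct consequence of the generating function $2/(e^{t}+1)=\sum_{n\ge 0}E_{n}(0)\,t^{n}/n!$ and the reflection $E_{n}(1)=(-1)^{n}E_{n}(0)$)
\[
E_{2m+1}(1)=\frac{2^{2m+2}-1}{m+1}\,B_{2m+2}.
\]
Inserting Euler's formula~(\ref{eq:Euler}) at $n=m+1$ to eliminate $B_{2m+2}$ in favour of $\zeta(2m+2)$, and using $(2m+2)!/(m+1)=2\,(2m+1)!$, a routine simplification turns the general term of the series in Theorem~\ref{teo:ln2} into
\[
\frac{\pi^{2}}{2}\,(-1)^{m}\,\frac{\pi^{2m}}{(2m+3)!}\,E_{2m+1}(1)
=\frac{2\,\bigl(1-2^{-(2m+2)}\bigr)\,\zeta(2m+2)}{(2m+2)(2m+3)}
=\frac{2\,\lambda(2m+2)}{(2m+2)(2m+3)},
\]
where $\lambda(s):=(1-2^{-s})\,\zeta(s)=\sum_{n\ge 0}(2n+1)^{-s}$. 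Setting $k=m+1$ reduces the theorem to the zeta-series identity
\[
2\sum_{k=1}^{\infty}\frac{\lambda(2k)}{2k\,(2k+1)}=\ln 2.
\]

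To evaluate this sum I would start from the partial-fraction expansion of $\tan$; expanding each summand in a geometric series yields the generating function
\[
\sum_{k=1}^{\infty}\lambda(2k)\,t^{2k}=\frac{\pi t}{4}\,\tan\!\left(\frac{\pi t}{2}\right),\qquad |t|<1.
\]
Defining $G(t):=\sum_{k\ge 1}\lambda(2k)\,t^{2k+1}/[2k(2k+1)]$, so that the target is $2\,G(1)$, termwise differentiation gives $G''(t)=(\pi/4)\tan(\pi t/2)$, and two successive integrations (with $G(0)=G'(0)=0$) produce $G'(t)=-\tfrac{1}{2}\ln\cos(\pi t/2)$ and $G(t)=-\tfrac{1}{2}\int_{0}^{t}\ln\cos(\pi s/2)\,ds$. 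After the substitution $u=\pi s/2$ and the classical evaluation $\int_{0}^{\pi/2}\ln\cos u\,du=-(\pi/2)\ln 2$, this gives $G(1)=(\ln 2)/2$, and hence $\ln 2$ on doubling.

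The main technical point is the passage to the boundary $t=1$: the generating function $(\pi t/4)\tan(\pi t/2)$ and $G'(t)$ both blow up there, so one must check that $\ln\cos(\pi s/2)\sim\ln((\pi/2)(1-s))$ is integrable near $s=1$, which makes $G$ continuous up to the boundary. Equivalently, the factor $1/[2k(2k+1)]$ supplies just enough extra decay beyond $\lambda(2k)\to 1$ for the series defining $G(1)$ to converge absolutely; once this is verified, the remaining steps are purely routine.
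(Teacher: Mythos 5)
Your proposal is correct, and its first half coincides exactly with the paper's: the identity $E_{2m+1}(1)=-E_{2m+1}(0)=\frac{2^{2m+2}-1}{m+1}B_{2m+2}$, the elimination of $B_{2m+2}$ via Euler's formula~(\ref{eq:Euler}), and the reindexing $k=m+1$ all reduce the claim to the same zeta-series
\[
\sum_{k=1}^{\infty}\bigl(1-2^{-2k}\bigr)\,\frac{\zeta(2k)}{k\,(2k+1)}=\ln 2 ,
\]
which is your $2\sum_{k\ge 1}\lambda(2k)/[2k(2k+1)]$. Where you genuinely diverge is in evaluating this sum. The paper imports the Tyagi--Holm series representation of $\zeta(s)$~\cite{Tyagi} and obtains $\ln 2$ by letting $s\to 1^{+}$, using $\lim_{s\to 1^{+}}\zeta(s)\,(1-2^{1-s})=\ln 2$; you instead build the sum from the classical generating function $\sum_{k\ge 1}\lambda(2k)\,t^{2k}=\frac{\pi t}{4}\tan(\pi t/2)$ (the partial-fraction expansion of the tangent), integrate twice, and finish with Euler's integral $\int_{0}^{\pi/2}\ln\cos u\,du=-\frac{\pi}{2}\ln 2$. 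Your route is more elementary and self-contained --- it needs only eighteenth-century ingredients rather than a 2007 preprint --- and it correctly isolates the one delicate point, namely the passage to the boundary $t=1$, which is justified either by Abel's theorem (the series $\sum\lambda(2k)/[2k(2k+1)]$ converges absolutely) or by the integrability of the logarithmic singularity of $\ln\cos(\pi s/2)$ at $s=1$. What the paper's approach buys in exchange is uniformity: the Tyagi--Holm/Milgram machinery is the same device it reuses for $\zeta(3)$ and for general $\zeta(2m+1)$ in Theorems~\ref{teo:z3} and~\ref{teo:z2m1}, whereas your double integration of $\tan$ would need higher-order antiderivatives (hence polylogarithms) to extend to those cases.
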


\begin{proof}
$\,$~Let $L$ be the number at the left-hand side of Eq.~(\ref{eq:t1}).  By noting that $E_{2 m+1}(1) = -E_{2 m+1}(0) = 2 \, \frac{2^{2m+2}-1}{2m+2} \, B_{2m+2}$, one has
\begin{eqnarray*}
L &=& \pi^2 \sum_{m=0}^\infty{(-1)^m \frac{\pi^{2 m}}{(2 m+3)!} \left( 2^{2m+2}-1\right) \frac{B_{2m+2}}{2m+2} } \\
&=& -\sum_{m=0}^\infty{ (-1)^{m+1} \frac{B_{2m+2}}{(2m+2)\,(2m+3)\,(2 m+2)!} \left[ (2 \, \pi)^{2 m+2} - \pi^{2 m+2} \right] } \, .
\end{eqnarray*}
By substituting $n=m+1$, one finds that
\begin{eqnarray*}
L = -\sum_{n=1}^\infty{ (-1)^{n} \frac{B_{2n}}{2 n (2n+1) (2 n)!} \left[(2 \, \pi)^{2 n} - \pi^{2 n} \right] } = -\sum_{n=1}^\infty{ (-1)^{n} \, \frac{\pi^{2 n}\,B_{2n}}{(2 n)!} \, \frac{2^{2 n} - 1}{2 n (2n+1)\,} } \, .
\end{eqnarray*}
From Euler's formula for $\zeta(2n)$ in Eq.~(\ref{eq:Euler}), one has
\begin{equation}
L = \sum_{n=1}^\infty{ \left(1  - 2^{-2 n} \right) \, \frac{\zeta(2 n)}{n\,(2 n+1)} } \, .
\label{eq:ln2}
\end{equation}
Now, let us reduce this latter series to a simple closed-form expression.  For this, let us make use of the following series representation for $\zeta(s)$ introduced recently by Tyagi and Holm (see Eq.~(3.5) in Ref.~\cite{Tyagi}):
\begin{equation}
\frac{\zeta(s) \cdot \left(1-2^{1-s}\right)}{\pi^{s-1} \, \sin{({\pi s/2})}} = \sum_{n=1}^\infty{\left(2-2^{s-2n}\right) \, \frac{\Gamma(2n-s+1)}{\Gamma(2n+2)} \, \zeta(2n-s+1)} \, ,
\end{equation}
where $\Gamma(x)$ is the gamma function.\footnote{Note that $\Gamma(k) = (k-1)!$ for positive integer values of $k$.}  As the series in the right-hand side converges when we make $s=1$, all we need to do is to take the limit, as $s \rightarrow 1^{^{+}}$, of the factors at the left-hand side. The simple pole of $\zeta(s)$ at $s=1$ yields $\,\lim_{s \rightarrow 1^{^{+}}}{(s-1) \cdot \zeta(s)} = 1$.  Also, by applying the l'Hospital rule it is easy to show that $\lim_{s \rightarrow 1}{\,\left(1-2^{1-s}\right) / (s-1)} = \ln{2}$. The product of these two limits yields $\,\lim_{s \rightarrow 1^{^{+}}}{\zeta(s) \, \left(1-2^{1-s}\right)} = \ln{2}$, thus
\begin{equation}
\frac{\ln{2}}{\pi^0 \, \sin{(\pi/2)}} = 2\, \sum_{n=1}^\infty{\left(1-2^{-2n}\right) \, \frac{(2n-1)!}{(2n+1)!}} \, \zeta(2 n) \, ,
\end{equation}
which simplifies to
\begin{equation*}
\sum_{n=1}^\infty{\left(1-2^{-2n}\right) \, \frac{\zeta(2 n)}{\,n\,(2n+1)}} = \ln{2} \,.
\end{equation*}
From Eq.~(\ref{eq:ln2}), one has $\,L = \ln{2}$.  
\end{proof}
\vspace{1mm}

\section{Dancs-He formula for odd zeta-values}

Before presenting a general proof for the Dancs-He series for $\,\zeta(2n+1)$, $n$ being a positive integer, let us tackle the lowest case, i.e. $\zeta(3)$, a number for which several series representations have been derived since the times of Euler~\cite{LT2012}.  For this number, Dancs and He found the following series representation (see Eq.~(3.1) of Ref.~\cite{Dancs}).

\begin{teo}[Dancs-He series for $\zeta(3)$]
\label{teo:z3}
\begin{equation*}
\zeta(3) = \frac{\pi^2}{9} \, \ln{4} \, -\frac{2\,\pi^4}{3} \, \sum_{m=0}^\infty{(-1)^m \, \frac{\pi^{2 m}}{(2 m+5)!} \: E_{2 m+1}(1)} \, .
\label{eq:t2}
\end{equation*}
\end{teo}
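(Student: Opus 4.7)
The plan is to mirror the strategy of Theorem~\ref{teo:ln2}: first reduce the series on the left of Eq.~(\ref{eq:t2}) to a zeta-series $\sum_n c_n\,\zeta(2n)$ by using the Bernoulli-number relation for $E_{2m+1}(1)$ together with Euler's formula~\eqref{eq:Euler}, and then evaluate that zeta-series via the Tyagi--Holm identity, now specialized at $s=3$ instead of $s=1$.

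The first step is entirely analogous to the $\ln 2$ case. Letting $Z$ denote the number on the left of Eq.~(\ref{eq:t2}), I would substitute $E_{2m+1}(1)=2\,(2^{2m+2}-1)\,B_{2m+2}/(2m+2)$, shift the summation index via $n=m+1$, and invoke Euler's formula for $\zeta(2n)$. The only algebraic difference with respect to Theorem~\ref{teo:ln2} is that the denominator is $(2m+5)!$ instead of $(2m+3)!$, which, after the shift, contributes the two extra factors $(2n+2)(2n+3)$. A routine calculation then yields
\begin{equation*}
Z \;=\; \frac{\pi^{2}\,\ln 4}{9} \;-\; \frac{4\pi^{2}}{3}\sum_{n=1}^{\infty}\frac{\bigl(1-2^{-2n}\bigr)\,\zeta(2n)}{n\,(2n+1)(2n+2)(2n+3)}\,.
\end{equation*}

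The second step is to evaluate the remaining zeta-series by taking the limit $s\to 3$ in the Tyagi--Holm identity used for Theorem~\ref{teo:ln2}. The choice $s=3$ is dictated by $\Gamma(2n-s+1)/\Gamma(2n+2)=1/[(2n-2)(2n-1)(2n)(2n+1)]$, which, after the reindexing $m=n-1$, reproduces exactly the denominator of the zeta-series just obtained for $Z$. The left-hand side of the Tyagi--Holm identity is regular at $s=3$ and equals $\zeta(3)\,(1-2^{-2})/[\pi^{2}\sin(3\pi/2)]=-3\,\zeta(3)/(4\pi^{2})$, and each term with $n\ge 2$ on the right can be evaluated directly at $s=3$, reproducing (after the shift $m=n-1$) the series displayed above.

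The main obstacle, and the interesting step, is the $n=1$ term at $s=3$, which is an indeterminate $0\cdot\infty$ form because $(2-2^{s-2})$ vanishes while $\Gamma(3-s)$ has a simple pole there. My plan is to resolve it using the local expansions $2-2^{s-2}=2(3-s)\ln 2+O((3-s)^{2})$, $\Gamma(3-s)=1/(3-s)-\gamma+O(3-s)$ and $\zeta(3-s)=-1/2+O(3-s)$, whose product tends to $-\ln 2$; dividing by $\Gamma(2n+2)\big|_{n=1}=6$ yields the limiting contribution $-\ln 2/6$. Putting everything together, the Tyagi--Holm identity at $s\to 3$ becomes
\begin{equation*}
-\frac{3\,\zeta(3)}{4\pi^{2}} \;=\; -\frac{\ln 2}{6} \;+\; \sum_{n=1}^{\infty}\frac{\bigl(1-2^{-2n}\bigr)\,\zeta(2n)}{n\,(2n+1)(2n+2)(2n+3)}\,,
\end{equation*}
and multiplying through by $-4\pi^{2}/3$ and comparing with the expression for $Z$ immediately gives $Z=\zeta(3)$. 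As a bonus, this also explains the otherwise mysterious $\pi^{2}\ln 4/9$ summand in Eq.~(\ref{eq:t2}): it is precisely the echo of the Laurent cancellation between the simple pole of $\Gamma(3-s)$ and the simple zero of $2-2^{s-2}$ at $s=3$.
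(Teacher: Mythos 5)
Your proposal is correct, and I have checked the arithmetic: your intermediate identity $\sum_{n\ge1}(1-2^{-2n})\,\zeta(2n)/[n(2n+1)(2n+2)(2n+3)] = -3\zeta(3)/(4\pi^2)+\ln 2/6$ agrees exactly with what the paper obtains by other means, and the final cancellation of the $\pi^2\ln 2$ terms goes through. However, your route differs genuinely from the paper's. The paper splits $\frac{\pi^2}{4}S$ into two separate zeta-series (with and without the factor $2^{-2n}$) and evaluates each in closed form from external results: the first from a parametrized summation formula of Srivastava--Glasser--Adamchik involving Hurwitz zeta derivatives (taking $t\to1^-$ and showing the $\zeta'(-3,1\pm t)$ terms cancel), the second from Wilton's formula; the constants $\ln(2\pi)/12$, $\ln\pi/12$ and $-11/72$ then cancel in the subtraction to leave $\ln 2/12$. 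You instead keep the combined series and evaluate it in one stroke by pushing the Tyagi--Holm identity of Theorem~\ref{teo:ln2} to $s\to3$, resolving the $0\cdot\infty$ indeterminacy of the $n=1$ term via the Laurent expansion of $\Gamma(3-s)$ against the simple zero of $2-2^{s-2}$. This is more unified (one identity serves both theorems), dispenses with the two external closed-form summations, and gives a cleaner explanation of where the $\ln 2$ comes from; it is in fact the $m=1$ specialization of Milgram's formula, which the paper itself invokes only later, in the proof of Theorem~\ref{teo:z2m1}. The one point you should shore up is the legitimacy of using the Tyagi--Holm representation in a neighborhood of $s=3$ and of taking the limit termwise there: the paper only exhibits the identity and its termwise limit near $s=1$, so you need either the statement of its domain of validity from Tyagi--Holm, or Milgram's analytic-continuation argument, to justify the $s\to3$ passage. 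With that reference supplied, the proof is complete.
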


\begin{proof}
$\,$~Let $S$ be the number for which the series at the left-hand side of Eq.~(\ref{eq:t2}) converges.  By substituting $E_{2 m+1}(1) = 2 \, \frac{2^{2m+2}-1}{2m+2} \, B_{2m+2}$ in this series, one has
\begin{eqnarray}
\pi^2 \, S &=& -2 \sum_{m=0}^\infty{(-1)^{m+1} \, \frac{\pi^{2 m+2}}{(2 m+5)!} \left( 2^{2m+2}-1\right) \, \frac{B_{2m+2}}{2m+2} } \nonumber \\
&=& -2 \sum_{m=0}^\infty{\frac{(-1)^{m+1} \, \left[(2 \, \pi)^{2 m+2}-\pi^{2 m+2}\right]}{(2m+2)\,(2m+3)\,(2m+4)\,(2m+5)} \: \frac{B_{2m+2}}{(2m+2)!}} \, .
\end{eqnarray}
By substituting $n=m+1$, one finds that
\begin{equation}
\pi^2 \, S = -2 \sum_{n=1}^\infty{\frac{(-1)^n \left[(2 \, \pi)^{2 n} -\pi^{2 n} \right]}{\,2 n\,(2n+1)\,(2n+2)\,(2n+3)} \, \frac{B_{2 n}}{(2 n)!}} \, .
\label{eq:juntos}
\end{equation}
From the relation between $B_{2 n}$ and $\zeta(2 n)$ in Eq.~(\ref{eq:Euler}), one has
\begin{equation}
\frac{\pi^2}{4} S = \sum_{n=1}^\infty{\frac{\zeta(2 n)}{2 n\,(2n+1)\,(2n+2)\,(2n+3)} } \, -\sum_{n=1}^\infty{\frac{\zeta(2 n)}{2 n\,(2n+1)\,(2n+2)\,(2n+3) \cdot 2^{2n}} } \, ,
\label{eq:z30}
\end{equation}
which is valid since the series in Eq.~(\ref{eq:juntos}) converges absolutely. The first series can be easily evaluated from a known summation formula (see Eq.~(713) in Ref.~\cite{LT2000}), namely
\begin{eqnarray}
\sum_{k=1}^\infty{\frac{\zeta(2 k)}{k\,(k+1)\,(2k+1)\,(2k+3)} \, t^{2k}} = \frac{\zeta(3)}{2 \pi^2} \, t^{-2} +\frac{\ln{(2 \pi)}}{3} -\frac{11}{18}  \nonumber \\
+ \frac{t^{-3}}{3} \left[ \zeta^{\,\prime}(-3,1+t)-\zeta^{\,\prime}(-3,1-t)\right] \, ,
\end{eqnarray}
where $\zeta(s,a)$ is the Hurwitz (or generalized) zeta function and $\zeta^{\,\prime}(s,a)$ is its derivative with respect to $s$.\footnote{The Hurwitz zeta function is classically defined for $\,\Re{(s)}>1\,$ as $\zeta(s,a) := \sum_{k=0}^\infty {\, 1 / (k+a)^s}$  ($a \ne 0,-1,-2,\ldots$), and its meromorphic continuation over the whole $s$-plane, with $\zeta(s,1) = \zeta(s)$, except by a simple pole at $\,s=1$.}  As this formula is valid for all $t$ with $|t| < 1$, it is legitimate to take the limit as $t \rightarrow 1^{-}$ on both sides, which yields
\begin{eqnarray}
\sum_{k=1}^\infty{\frac{\zeta(2 k)}{2 k\,(2 k+1)\,(2k+2)\,(2k+3)}} = \frac{\zeta(3)}{8 \pi^2} +\frac{\ln{(2 \pi)}}{12} -\frac{11}{72} \nonumber \\
+ \frac{1}{12} \, \lim_{\:t \rightarrow 1^{^{-}}}{\left[ \zeta^{\,\prime}(-3,1+t)-\zeta^{\,\prime}(-3,1-t)\right]} \, .
\label{eq:soma1}
\end{eqnarray}
The remaining limit is null, since $\,\lim_{t \rightarrow 1^{^{-}}}{\zeta^{\,\prime}(-3,1+t)} = \zeta^{\,\prime}(-3,2) = \zeta^{\,\prime}(-3) = \lim_{t \rightarrow 1^{^{-}}}{\,\zeta^{\,\prime}(-3,1-t)}$, which reduces Eq.~(\ref{eq:soma1}) to
\begin{equation}
\sum_{k=1}^\infty{\frac{\zeta(2 k)}{2 k\,(2 k+1)\,(2k+2)\,(2k+3)}} = \frac{\zeta(3)}{8 \pi^2} + \frac{\ln{(2 \pi)}}{12} \, -\frac{11}{72} \, .
\label{eq:s1}
\end{equation}
For the second series in Eq.~(\ref{eq:z30}), let us make use of the Wilton's formula (Eq.~(38) at p.~303, in Ref.~\cite{LT2012};  also Eq.~(54) in Ref.~\cite{Cho2006}):
\begin{equation}
\sum_{k=1}^\infty{\frac{\zeta(2 k)}{k\,(k+1)\,(2k+1)\,(2k+3) \cdot 2^{2k}}} = \frac{2 \zeta(3)}{\pi^2} + \frac{\ln{\pi}}{3} -\frac{11}{18} \, ,
\end{equation}
which can be written as
\begin{equation}
\sum_{k=1}^\infty{\frac{\zeta(2 k)}{2k\,(2k+1)\,(2k+2)\,(2k+3) \cdot 2^{2k}}} = \frac{\zeta(3)}{2 \pi^2} + \frac{\ln{\pi}}{12} -\frac{11}{72} \, .
\label{eq:s2}
\end{equation}
Now, by doing a member-to-member subtraction of Eqs.~(\ref{eq:s1}) and~(\ref{eq:s2}) and putting the result in Eq.~(\ref{eq:z30}), one has the desired result:
\begin{equation*}
\frac{2\,\pi^2}{9} \, \ln{2} \, -\frac{2\,\pi^4}{3} \, S = \zeta(3) .
\label{eq:fim}
\end{equation*}
\end{proof}


Now, let us generalize the above result for all $\,\zeta(2m+1)$, $m$ being a positive integer. See Eq.~(3.1) of Ref.~\cite{Dancs}.

\begin{teo}[Dancs-He series for odd zeta-values]  \label{teo:z2m1}
For any integer $m$, $m>0$,
\begin{eqnarray*}
\left(1-2^{-2m}\right) \, \zeta(2m+1) = \sum_{j=1}^{m-1}{\frac{(-1)^j\,\pi^{2j}}{(2j+1)!} \, (2^{2j-2m}-1) \, \zeta(2m-2j+1)}  \nonumber \\
-\frac{(-1)^m \, \pi^{2m} \ln{2}}{(2m+1)!} + \frac{(-1)^m \, \pi^{2m+2}}{2} \, \sum_{k=0}^\infty{(-1)^k \, \frac{\pi^{2 k} \, E_{2k+1}(1)}{(2k +2m +3)!} } \, .
\label{eq:t3}
\end{eqnarray*}
\end{teo}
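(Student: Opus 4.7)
The plan is to reuse the Tyagi-Holm identity already deployed in the proof of Theorem~\ref{teo:ln2}, but now evaluated in the limit $s \to 2m+1$ (instead of $s \to 1$). Since $\sin{(\pi(2m+1)/2)} = (-1)^m$, the left-hand side of that identity becomes $\zeta(2m+1)\,(1-2^{-2m})/[(-1)^m\,\pi^{2m}]$. Multiplying across by $(-1)^m \pi^{2m}$, the entire task reduces to evaluating the right-hand series $\,\sum_{n=1}^\infty (2-2^{s-2n})\,\Gamma(2n-s+1)\,\zeta(2n-s+1)/\Gamma(2n+2)\,$ as $s \to 2m+1$, which I would split into (i)~the tail $n \ge m+1$, (ii)~the boundary term $n = m$, and (iii)~the low indices $1 \le n \le m-1$.

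For the tail~(i), the substitution $k = n - m - 1$ turns the summand into $(2-2^{-2k-1})\,(2k+1)!\,\zeta(2k+2)/(2k+2m+3)!$. Combining Eq.~(\ref{eq:Euler}) with $E_{2k+1}(1) = 2(2^{2k+2}-1)\,B_{2k+2}/(2k+2)$ (as in the proofs of Theorems~\ref{teo:ln2} and~\ref{teo:z3}), the prefactor $(2-2^{-2k-1})\cdot 2^{2k}/(2^{2k+2}-1)$ collapses to the clean value $1/2$, and the tail reproduces exactly the infinite series $\,\frac{(-1)^m \pi^{2m+2}}{2}\sum_{k=0}^\infty (-1)^k\,\pi^{2k}\,E_{2k+1}(1)/(2k+2m+3)!\,$ on the right-hand side of Eq.~(\ref{eq:t3}).

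For the low indices~(iii), I would set $s = 2m+1-\epsilon$, so the $n$-th summand features $\Gamma(-2\mu+\epsilon)\,\zeta(-2\mu+\epsilon)$ with $\mu := m-n \ge 1$. Here $\Gamma$ has a simple pole of residue $1/(2\mu)!$ while $\zeta$ has a simple zero; the product tends to $(-1)^\mu \,\zeta(2\mu+1)/[2(2\pi)^{2\mu}]$ via the standard identity $\zeta^{\,\prime}(-2\mu) = (-1)^\mu (2\mu)!\,\zeta(2\mu+1)/[2(2\pi)^{2\mu}]$, which in turn follows from differentiating the functional equation for $\zeta$. Combined with $\,2 - 2^{2\mu+1-\epsilon} \to -2(2^{2\mu}-1)\,$ and $\,1/\Gamma(2n+2) = 1/(2n+1)!\,$, each contribution, after the global multiplication by $(-1)^m\,\pi^{2m}$, simplifies to $(-1)^n\,\pi^{2n}\,(2^{2n-2m}-1)\,\zeta(2m-2n+1)/(2n+1)!$, so summing over $n = 1,\ldots,m-1$ and relabeling $n \to j$ yields precisely the finite sum in Eq.~(\ref{eq:t3}).

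The boundary term~(ii) is the source of the $\ln{2}$. For $n = m$ and $s = 2m+1-\epsilon$, the summand involves $(2-2^{1-\epsilon})\,\Gamma(\epsilon)\,\zeta(\epsilon)/(2m+1)!$, in which $2 - 2^{1-\epsilon} = 2\epsilon \ln{2} + O(\epsilon^2)$ cancels the simple pole of $\Gamma(\epsilon)$, while $\zeta(0) = -1/2$ produces the finite limit $-\ln{2}/(2m+1)!$; multiplied by $(-1)^m \pi^{2m}$, this is exactly the $\,-(-1)^m \pi^{2m}\,\ln{2}/(2m+1)!\,$ term of Eq.~(\ref{eq:t3}). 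I expect the main obstacle to be the delicate bookkeeping in step~(iii), where the poles of $\Gamma$ at the negative even integers must be paired with the zeros of $\zeta$ there, and the signs and powers of $2$ must be tracked carefully through the limit $s \to 2m+1$ so as to recover the clean coefficient $(2^{2j-2m}-1)$ demanded by the final identity.
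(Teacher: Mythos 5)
Your proposal is correct, and it takes a genuinely different (in fact more self-contained) route than the paper. The paper converts the infinite series $\tilde{S}$ into the zeta-series $\sum_{n\ge 1}(2-2^{1-2n})\,\Gamma(2n)\,\zeta(2n)/\Gamma(2n+2m+2)$ via $E_{2k+1}(1)=2\,(2^{2k+2}-1)\,B_{2k+2}/(2k+2)$ and Euler's formula, and then simply \emph{cites} Milgram's Eq.~(13) --- which expresses $\zeta(2m+1)$ in terms of exactly that series --- as a black box. You instead rederive Milgram's formula from scratch by pushing the Tyagi--Holm identity (already quoted in the proof of Theorem~\ref{teo:ln2}) to the limit $s\to 2m+1$ and splitting the sum at $n=m$: your residue bookkeeping checks out in all three pieces. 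In the tail the prefactor $(2-2^{-2k-1})\,2^{2k}/(2^{2k+2}-1)=1/2$ is indeed exact and reproduces $\frac{\pi^2}{2}\tilde{S}$; at $n=m$ the factor $2-2^{1-\epsilon}=2\epsilon\ln 2+O(\epsilon^2)$ against $\Gamma(\epsilon)\,\zeta(\epsilon)$ gives $-\ln 2/(2m+1)!$; and for $1\le n\le m-1$ the pairing of the pole of $\Gamma$ at $-2\mu$ (residue $1/(2\mu)!$) with the trivial zero of $\zeta$, together with $\zeta^{\,\prime}(-2\mu)=(-1)^\mu (2\mu)!\,\zeta(2\mu+1)/[2\,(2\pi)^{2\mu}]$, yields $(-1)^n\pi^{2n}(2^{2n-2m}-1)\,\zeta(2m-2n+1)/(2n+1)!$ after multiplication by $(-1)^m\pi^{2m}$, exactly as claimed. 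What your route buys is transparency: it makes visible where the $\ln 2$ and the lower odd zeta-values come from, which is precisely the ``mystery'' the paper sets out to dispel, at the cost of having to justify two analytic points that the citation of Milgram sweeps away --- namely the validity of the Tyagi--Holm representation in a neighbourhood of $s=2m+1$ (not just near $s=1$), and the interchange of the limit $s\to 2m+1$ with the infinite summation, which is not purely termwise since the terms $n\le m$ are individually singular at $s=2m+1$ and only their limits exist. Neither point is deep (a uniform bound on the tail plus continuity of the finitely many regularized terms suffices), but a complete write-up should address them.
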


\begin{proof}
$\,$~Let $\tilde{S}$ be the number for which the series at the right-hand side of Eq.~(\ref{eq:t3}) converges, i.e.
\begin{equation}
\tilde{S} := \sum_{k=0}^\infty{(-1)^k \, \frac{\pi^{2 k} \, E_{2k+1}(1)}{(2k +2m +3)!} } \, .
\end{equation}
By substituting $\,E_{2 k+1}(1) = 2 \, \frac{2^{2k+2}-1}{2k+2} \, B_{2k+2}\,$ in this series, one has
\begin{eqnarray}
\pi^2 \, \tilde{S} = \sum_{k=0}^\infty{(-1)^k \, \frac{\pi^{2 k+2}}{(2k+2m+3)!} \: 2 \left( 2^{2k+2}-1\right) \, \frac{B_{2k+2}}{2k+2} } \, .
\end{eqnarray}
By putting $n=k+1$, one finds that
\begin{equation}
\pi^2 \, \tilde{S} = -2 \sum_{n=1}^\infty{\frac{(-1)^n \pi^{2 n}}{(2 n+2m+1)!} \, \frac{\left(2^{2n}-1\right)}{2n} \, B_{2 n}} \, .
\end{equation}
From Eq.~(\ref{eq:Euler}), one has
\begin{eqnarray}
\frac{\pi^2}{2} \: \tilde{S} &=& 2 \sum_{n=1}^\infty{\left(1-2^{-2n}\right) \, \frac{(2n-1)!}{(2n+2m+1)!} \: \zeta(2 n) } \nonumber \\
&=& \sum_{n=1}^\infty{\left(2-2^{1-2n}\right) \, \frac{\Gamma(2n)}{\Gamma(2n+2m+2)} \: \zeta(2 n) } \, .
\end{eqnarray}
This latter series is just the one that appears in a formula for odd zeta-values derived recently by Milgran (see Eq.~(13) in Ref.~\cite{Milgran}), namely
\begin{eqnarray}
\zeta(2m+1) &=& \frac{(-1)^m \, \pi^{2m}}{1-2^{-2m}} \, \left[ -\frac{\ln{2}}{(2m+1)!} + \frac{\pi^2}{2} \, \tilde{S} \right]  +\frac{1}{1-2^{-2m}} \nonumber \\
&\times& \sum_{n=1}^{m-1}{\left(2^{2n-2m}-1\right) \, (-\pi^2)^n \, \frac{\zeta(2 m-2n+1)}{(2n+1)!} } \, .
\end{eqnarray}
By multiplying both sides by $1-2^{-2m}$, one has
\begin{eqnarray*}
\left(1-2^{-2m}\right)\,\zeta(2m+1) &=& (-1)^m \, \pi^{2m} \, \left[ -\frac{\ln{2}}{(2m+1)!} + \frac{\pi^2}{2} \, \tilde{S} \right] \nonumber \\
&+& \sum_{n=1}^{m-1}{(-1)^n\,\left(2^{2n-2m}-1\right) \, \pi^{2n} \, \frac{\zeta(2 m-2n+1)}{(2n+1)!} } 
\end{eqnarray*}
\begin{eqnarray*}
{\,} &=& -(-1)^m \, \pi^{2m} \, \frac{\ln{2}}{(2m+1)!} +(-1)^m \frac{\pi^{2m+2}}{2} \: \tilde{S} \nonumber \\
&+& \sum_{n=1}^{m-1}{\frac{(-1)^n\,\pi^{2n}}{(2n+1)!} \, \left(2^{2n-2m}-1\right) \, \zeta(2 m-2n+1) } \, ,
\end{eqnarray*}
which completes our proof.   
\end{proof}



\begin{thebibliography}{99}

\bibitem{Tsumura} \textsc{H. Tsumura}, \textit{An elementary proof of Euler's formula for $\zeta{(2\,m)}$}. Amer. Math. Monthly \textbf{111} (2004), 430--431.

\bibitem{Apery} \textsc{R. Ap\'{e}ry}, \textit{Irrationalit\'{e} de $\zeta(2)$ et $\zeta(3)$}. Ast\'{e}risque \textbf{61} (1979), 11--13.  

\bibitem{Dancs} \textsc{M. J. Dancs and T.-X. He}, \textit{An Euler-type formula for $\zeta{(2k+1)}$}. J. Number Theory \textbf{118} (2006), 192--199.

\bibitem{Tyagi} \textsc{S. Tyagi and C. Holm}, \textit{A new integral representation for the Riemann zeta function}. arXiv: 0703073 (2007).

\bibitem{LT2000}  \textsc{H. M. Srivastava, M. L. Glasser, and V. S. Adamchik}, \textit{Some definite integrals associated with the Riemann zeta function}. Zeitschrift f\"{u}r Analysis and Anwendungen \textbf{19} (2000), 831--846.

\bibitem{LT2012}  \textsc{H. M. Srivastava and J. Choi}, \textit{Zeta and $q$-zeta functions and associated series and integrals}. Elsevier, Amsterdam, 2012.

\bibitem{Cho2006} \textsc{Y. J. Cho, M. Jung, J. Choi, and H. M. Srivastava}, \textit{Closed-form evaluations of definite integrals and associated infinite series involving the Riemann zeta function}. Int. J. Comp. Math. \textbf{83} (2006), 461--472.

\bibitem{Milgran} \textsc{M. Milgran}, \textit{Notes on a paper of Tyagi and Holm: ``A new integral representation for the Riemann zeta function''}. arXiv: 0710.0037v1  (2007).

\end{thebibliography}
\end{document}